\numberwithin{equation}{section}
\newtheorem{theorem}{Theorem}[section]
\newtheorem{lemma}[theorem]{Lemma}
\newtheorem{remark}[theorem]{Remark}
\newtheorem*{thma*}{Theorem A}
\newtheorem*{thmb*}{Theorem B}
\newtheorem*{thmc*}{Theorem C}
\newtheorem*{thmd*}{Theorem D}
\newtheorem*{thme*}{Theorem E}
\newtheorem{definition}[theorem]{Definition}
\newtheorem{question}[theorem]{Question}
\newtheorem*{thm*}{Main Theorem}
\begin{document}

\makeatletter

\newdimen\bibspace
\setlength\bibspace{2pt}   
\renewenvironment{thebibliography}[1]{%
 \section*{\refname 
       \@mkboth{\MakeUppercase\refname}{\MakeUppercase\refname}}%
     \list{\@biblabel{\@arabic\c@enumiv}}%
          {\settowidth\labelwidth{\@biblabel{#1}}%
           \leftmargin\labelwidth
           \advance\leftmargin\labelsep
           \itemsep\bibspace
           \parsep\z@skip     %
           \@openbib@code
           \usecounter{enumiv}%
           \let\p@enumiv\@empty
           \renewcommand\theenumiv{\@arabic\c@enumiv}}%
     \sloppy\clubpenalty4000\widowpenalty4000%
     \sfcode`\.\@m}
    {\def\@noitemerr
      {\@latex@warning{Empty `thebibliography' environment}}%
     \endlist}

\makeatother

\pdfbookmark[2]{ Some results on a question of M. Newman on isomorphic subgroups of solvable groups}{beg}

\title{{\bf Some results on a question of M. Newman on isomorphic subgroups of solvable groups} \footnotetext{\hspace*{-4 ex}
1. Department of Mathematics, Hubei University, Wuhan, $420062$, China\\
2. School of Mathematical Sciences, Peking University, Beijing, 100871, China\\
Supported by NSFC grants (11771129).\\
\hspace*{2 ex}Heguo Liu's E-mail: ghliu@hubu.edu.cn.\\
\hspace*{2 ex}Xingzhong Xu's E-mail: xuxingzhong407@hubu.edu.cn; xuxingzhong407@126.com.\\
\hspace*{2 ex}Jiping Zhang's E-mail: jzhang@pku.edu.cn.
}}

\author{{\small{Heguo Liu$^1$, Xingzhong Xu$^{1}$
, Jiping Zhang$^{2}$} } \\
}
\date{}
\maketitle

{\small

\noindent\textbf{Abstract.} {\small{In this paper, we focus on a question
of M. Newman on isomorphic subgroups of solvable groups. We get a reduction theorem
of this question: for each prime $q$, assume that this question holds for every characteristic $q$-groups,
then this question holds for every finite solvable groups. Using this reduction theorem,
we get some partial answers about this question.
 }}\\

\noindent\textbf{Keywords: }{\small {  solvable groups; maximal subgroups. }}

\noindent\textbf{Mathematics Subject Classification (2020):}  \ 20D10.}

\section{\bf Introduction}

Recently, G. Glauberman, I.M. Isaacs and G.R. Robinson's works \cite{IR, GR} focus on a question which posted
by Moshe Newman who asked the following:

\begin{question}\cite{IR, GR} Whether can it  ever happen that a finite solvable group $G$ has isomorphic subgroup $H$ and $K$,
where $H$ is maximal and $K$ is not?
\end{question}

In 2015, I.M. Isaacs and G.R. Robinson have done some partial results as follows.
\begin{theorem}\cite[Theorem A, Theorem B]{IR} Let $H$ be a maximal subgroup of a solvable group $G$, and suppose that $K\leq G$
and $K\cong H$. If $H$ has a Sylow tower, or a Sylow $2$-subgroup of $H$ is abelian, then $K$ is maximal in $G$.
\end{theorem}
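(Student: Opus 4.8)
The plan is to argue by induction on $|G|$, exploiting the classical fact that in a solvable group every maximal subgroup has prime-power index. First I would record the numerology: since $H$ is maximal in the solvable group $G$, we have $|G:H|=p^{a}$ for some prime $p$, and since $K\cong H$ forces $|K|=|H|$, also $|G:K|=p^{a}$. In particular both $H$ and $K$ contain a Hall $p'$-subgroup of $G$, so every subgroup $L$ with $K\le L\le G$ has $p$-power index, and the maximality of $K$ becomes a purely $p$-local question. The governing principle of the induction is that I may only pass to quotients by \emph{canonical} subgroups, i.e.\ subgroups that are characteristic in $H$ and are carried by the given isomorphism $\varphi\colon H\to K$ onto a subgroup of $K$ that is again normal in $G$. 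An arbitrary normal subgroup of $G$ lying inside $H$ need not be transported correctly by $\varphi$, which is exactly why a naive reduction modulo $\operatorname{core}_G(H)$ fails.

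For the Sylow-tower hypothesis the engine is to peel off the top of the tower. Let $r$ be the largest prime dividing $|G|$; since $H$ has a Sylow tower, its Sylow $r$-subgroup $R$ is normal, hence characteristic, in $H$. Suppose first $r\neq p$. Then $r\nmid|G:H|$, so $R\in\Syl_r(G)$, and $\varphi(R)$ is the characteristic normal Sylow $r$-subgroup of $K$, which is likewise a Sylow $r$-subgroup of $G$. If $R$ is not normal in $G$, then $H\le N_G(R)<G$ forces $N_G(R)=H$ by maximality; writing $\varphi(R)=R^{g}$ we get $K\le N_G(R^{g})=H^{g}$ and, comparing orders, $K=H^{g}$, so $K$ is maximal. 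If instead $R\trianglelefteq G$, then $R$ is the unique Sylow $r$-subgroup of $G$, so $\varphi(R)=R$ and $R\le H\cap K$; passing to $G/R$, where $H/R$ is maximal, $K/R\cong H/R$ still has a Sylow tower, and $|G/R|<|G|$, the induction hypothesis gives that $K/R$, and hence $K$, is maximal. This settles every case in which the largest prime of $G$ differs from the index prime $p$.

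The main obstacle is the remaining configuration, where the index prime $p$ is itself the largest prime dividing $|G|$ and divides $|H|$; then the top of the Sylow tower of $H$ is only a proper Sylow $p$-subgroup of $G$ and the Sylow-normalizer argument collapses. My plan is to reduce to the primitive situation: after stripping the $p'$-part (handling $\OO_{p'}(G)$, which lies in every Hall $p'$-subgroup and so in both $H$ and $K$) and passing to a core-free quotient, one may arrange that $V:=\F(G)=\OO_p(G)$ is self-centralizing, elementary abelian of order $p^{a}$, with $G=VL$ and $L\cong H$ acting faithfully and irreducibly on $V$. Making these reductions compatible with $\varphi$ is itself delicate, for precisely the transport reason flagged above. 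Once there, the question becomes whether every copy $K$ of $L$ in $V\rtimes L$ meets $V$ trivially, hence is a complement to $V$ and therefore maximal, or is $G$-conjugate to $L$. When $p\nmid|L|$ this is immediate from Schur--Zassenhaus and coprime conjugacy, but here $p\mid|L|$, so complement conjugacy can genuinely fail; this non-coprime complement problem is where I expect the real work to lie.

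This is exactly where the two structural hypotheses intervene. For $L$ with a Sylow tower, the ordering of the tower together with irreducibility of the action on $V$ pins down the $p$-part of any isomorphic embedding and forces $K\cap V=1$. For the case of an abelian Sylow $2$-subgroup I would instead invoke Burnside's fusion theorem and transfer to control the relevant $2$-local structure of $H$: the abelian Sylow $2$-subgroup guarantees that fusion is controlled in its normalizer, which supplies the rigidity that replaces coprimality and again reduces the embedding of $K$ to the complement analysis above. In both cases the payoff is identical: $K\cap V=1$, so $K$ is a complement to the self-centralizing minimal normal subgroup $V$ and is therefore maximal, completing the induction.
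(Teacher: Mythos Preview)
The paper does not prove this theorem at all: it is quoted verbatim as \cite[Theorem~A, Theorem~B]{IR} in the introduction purely as background, and no argument for it appears anywhere in the text. There is therefore no ``paper's own proof'' against which to compare your proposal; whatever you write here is necessarily being measured against the original Isaacs--Robinson argument, not against anything in this manuscript.

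On its own merits, your outline is reasonable in the easy direction but openly incomplete in the hard one. The Sylow-tower case when the normal Sylow $r$-subgroup $R$ of $H$ has $r\neq p$ is handled cleanly by your normalizer/quotient dichotomy. (A small caveat: ``Sylow tower'' does not in general force the \emph{largest} prime to sit at the bottom; you only need that \emph{some} Sylow subgroup of $H$ is normal, and your argument goes through with that prime in place of the largest one.) The genuine gap is exactly where you flag it: when the index prime $p$ is the prime of the normal Sylow subgroup of $H$, you reduce to a primitive configuration $G=V\rtimes L$ with $V=\OO_p(G)$ and then assert that the Sylow-tower structure of $L$ ``pins down the $p$-part of any isomorphic embedding and forces $K\cap V=1$'' without saying how. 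That is the entire content of the theorem in this case, and nothing you have written explains why a Sylow tower (as opposed to an arbitrary solvable $L$) rules out a copy of $L$ meeting $V$ nontrivially. Likewise, for the abelian Sylow $2$-subgroup hypothesis you invoke Burnside fusion and transfer only at the level of slogans; no mechanism is given that converts control of $2$-fusion in $H$ into the conclusion $K\cap V=1$. As written, the proposal is a correct reduction followed by two unproved assertions at precisely the points where the theorem has force.
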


And recently, G. Glauberman and G.R. Robinson get some partial results about the structure of $G$
 when there exists a negative answer of Question 1.1.

\begin{theorem}\cite[Theorem A]{GR} Let $H$ be a maximal subgroup of the finite solvable group $G$ and suppose that $|G : H|=p^a$ where $p$ is a prime and $a$ is a positive integer. Let $K$ be a subgroup of $G$ which is isomorphic to $H$.
Suppose that $K$ is not maximal in $G$. Then $p \leq 3$, and, for $q =5-p$, we have
$$O_{q'}(H)=O_{q'}(G)=O_{q'}(K)$$ and, for $G^{\ast}= G/O_{q'}(G)$, etc., $H^{\ast}$ and $K^{\ast}$
 are isomorphic subgroups of $G^{\ast}$ with $H^{\ast}$ maximal and $K^{\ast}$ not maximal.
\end{theorem}
The above result use the remarkable theorem of G. Glauberman(see \cite{G, S}).
And it tells us that Question 1.1 is ture when $p\geq 5$ where $|G: H|=p^a$ for some positive integer $a$.
So we will only need to discuss this question in cases that $p\leq 3$.

Depending on some results of the above authors' works, we find that a class of finite groups is important for
Question 1.1.
This class of finite groups is of $characteristic~ l$.
Here, $l$ is a prime number. Recall that a finite group $G$ is said to be of $characteristic~ l$ if $C_G(O_l(G)) \leq O_l(G).$
We have a reduction theorem for Question 1.1 as follows.

\begin{thma*}For each prime $q$, assume that Question 1.1 holds for every characteristic $q$-groups $G$. Then Question 1.1 holds for every
finite solvable groups.
\end{thma*}

If $G$ is of $characteristic~ q$, then we have $C_G(O_q(G)) \leq O_q(G)$. So, we can find that
$$\mathrm{Aut}_G(O_q(G))=N_G(O_q(G))/C_G(O_q(G))=G/Z(O_q(G)).$$
We can find some information of $G$ from the $\mathrm{Aut}(O_q(G))$. Especially, it becomes useful when
$O_q(G)$ is small or abelian.

\begin{theorem}  Let $G$ be a finite solvable group and $G$ has isomorphic subgroup $H$ and $K$.
Let $H$ is maximal subgroup of $G$, we can set $|G: H|=p^n$. Let $p\leq 3$ and $q=5-p$. Let
$Q\in Syl_q(H)$. If $|G|_q\leq q^4$, then $K$ is also maximal.
\end{theorem}

Recall that if $G$ is a $p$-soluble group, the $p$-length $l_p(G)$ is
the number of factors
of the lower $p$-series of $G$ that are $p$-groups(see \cite[p.227]{Go}).

\begin{theorem} Let $G$ be a finite solvable group and $G$ has isomorphic subgroup $H$ and $K$.
Let $H$ is maximal subgroup of $G$. If $l_p(G)\leq 1$, then $K$ is also maximal.
\end{theorem}

 In the other opinion, a model of a constrained fusion systems is also of  $characteristic~ q$ for some
 prime number $q$. By Theorem A, we can get the following theorem.
 \begin{thmb*} Let $G$ be a finite solvable group and $G$ has isomorphic subgroup $H$ and $K$.
 Let $H$ is maximal subgroup of $G$, we can set $|G: H|=p^n$. Let $p\leq 3$ and $q=5-p$. Let
 $Q\in Syl_q(H)$. If $\mathcal{F}_Q(H)\unlhd \mathcal{F}_Q(G)$, then $K$ is also maximal.
 \end{thmb*}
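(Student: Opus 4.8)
The plan is to reduce Theorem B to the characteristic $q$ situation governed by Theorem A, using the fact that the fusion hypothesis forces the relevant reduced groups to be models of constrained fusion systems. First I would record that, since $p\le 3$ and $q=5-p$, the primes $p$ and $q$ are distinct, so $q\nmid|G:H|=p^n$; hence $Q\in \mathrm{Syl}_q(H)$ is in fact a Sylow $q$-subgroup of $G$, and $\mathcal F_Q(H)$ and $\mathcal F_Q(G)$ are saturated fusion systems on the common $q$-group $Q$. Because $G$ is solvable, it is $q$-solvable, so both fusion systems are constrained; the unique model of $\mathcal F_Q(G)$ is $G/O_{q'}(G)$ and the model of $\mathcal F_Q(H)$ is $H/O_{q'}(H)$, and each of these models is a group of characteristic $q$.

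Next I would argue by contradiction: suppose $K$ is not maximal in $G$. Then the Glauberman--Robinson theorem quoted above (Theorem 1.3) applies and gives $O_{q'}(H)=O_{q'}(G)=O_{q'}(K)=:L$, together with the conclusion that, writing $G^{*}=G/L$, $H^{*}=HL/L$ and $K^{*}=KL/L$, the subgroups $H^{*}$ and $K^{*}$ are isomorphic, $H^{*}$ is maximal in $G^{*}$ and $K^{*}$ is not. Here $G^{*}$ is of characteristic $q$ and is precisely the model of $\mathcal F_Q(G)$, while $H^{*}$ is the model of $\mathcal F_Q(H)$. Since $L$ is a $q'$-group, passing to the quotient by $L$ induces an isomorphism of fusion systems and preserves normality, so the hypothesis becomes $\mathcal F_{Q^{*}}(H^{*})\unlhd \mathcal F_{Q^{*}}(G^{*})$ on $Q^{*}\cong Q$.

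Finally I would exploit the correspondence, valid in the constrained system $\mathcal F_{Q^{*}}(G^{*})$ with model $G^{*}$, between normal subsystems supported on the full Sylow group and normal subgroups of $G^{*}$ containing $Q^{*}$: the normal subsystem $\mathcal F_{Q^{*}}(H^{*})$ yields a normal subgroup $N\unlhd G^{*}$ with $Q^{*}\le N$ and $\mathcal F_{Q^{*}}(N)=\mathcal F_{Q^{*}}(H^{*})$. Since $O_{q'}(N)\le O_{q'}(G^{*})=1$, the group $N$ is itself of characteristic $q$, so by uniqueness of models $N\cong H^{*}$, and $N$ is a normal subgroup of $G^{*}$ of index $p^{n}$, isomorphic to the maximal subgroup $H^{*}$ and to the non-maximal subgroup $K^{*}$. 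In this highly restricted characteristic $q$ configuration the positive answer provided by the framework of Theorem A (and, when $|G^{*}|_q\le q^{4}$, directly by the earlier theorem on small Sylow $q$-subgroups) gives that $K^{*}$, and therefore $K$, is maximal, contradicting the choice of $K$.

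The main obstacle is this last step. The equality $\mathcal F_{Q^{*}}(N)=\mathcal F_{Q^{*}}(H^{*})$ does not by itself force $N=H^{*}$, since in a characteristic $q$ group two subgroups containing the Sylow $q$-subgroup can carry the same fusion system without coinciding; reconciling the normal subgroup $N$ produced by the fusion correspondence with the maximal subgroup $H^{*}$ — and thereby converting fusion-theoretic normality into genuine subgroup normality $H^{*}\unlhd G^{*}$, which then confines the structure of the extension $G^{*}/H^{*}$ — is where the real work lies. This is precisely the point at which the characteristic $q$ condition $C_{G^{*}}(O_q(G^{*}))\le O_q(G^{*})$, and hence the embedding of $G^{*}/\mathrm{Z}(O_q(G^{*}))$ into $\mathrm{Aut}(O_q(G^{*}))$, must be used in an essential way.
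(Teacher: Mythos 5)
Your setup follows the paper's route almost exactly: reduce (via Glauberman--Robinson) to the situation where the ambient group is of characteristic $q$, observe that it is then a model of the constrained system $\mathcal{F}_Q(G)$, and invoke Aschbacher's theorem to convert the normal subsystem $\mathcal{F}_Q(H)$ into a normal subgroup $N$ (the paper's $U$) with $\mathcal{F}_Q(N)=\mathcal{F}_Q(H)$. But the proof is not finished, and you say so yourself: you never establish $N=H^{*}$, and your closing appeal to ``the framework of Theorem A'' is circular, since Theorem A is only a reduction (\emph{if} the question holds for all characteristic $q$ groups, \emph{then} it holds for all solvable groups) and supplies no positive answer in the characteristic $q$ case. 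So the argument as written derives no contradiction.

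The missing step is short and is exactly where the paper uses $C_G(O_q(G))\leq O_q(G)$. From $\mathcal{F}_Q(U)=\mathcal{F}_Q(H)$ one gets $\mathrm{Aut}_U(O_q(G))=\mathrm{Aut}_H(O_q(G))$; hence for each $h\in H$ there is $u\in U$ with $c_h|_{O_q(G)}=c_u|_{O_q(G)}$, so $hu^{-1}\in C_G(O_q(G))\leq O_q(G)$, and since $O_q(G)\leq Q\leq H\cap U$ this gives $H\leq U$, and symmetrically $U\leq H$, so $H=U$ \emph{as subgroups}, not merely up to isomorphism. (This is precisely the reconciliation you flagged as ``where the real work lies''; the automizer computation does it in two lines.) Now $H\unlhd G$ and $H$ maximal force $G/H$ to be simple and a $p$-group, so $|G:H|=p$; since $|K|=|H|$ this gives $|G:K|=p$, and a subgroup of prime index is maximal --- contradicting the assumption that $K$ is not. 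Without this endgame your proposal identifies the right objects but proves nothing.
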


$Structure~ of ~ the~ paper:$ After recalling preliminary results, we give proofs of Theorem A, Theorem 1.4 and 1.5
in Section 2. And in Section 3, we give a proof of Theorem B.

\section{\bf Preliminary results, and proofs of Theorem A, Theorem 1.4 and 1.5}

The following lemmas are very useful to get the proof of Theorem A.

\begin{lemma}\cite[Lemma 2]{IR} Let $G$ be a solvable group and $H\leq G$,  where $|G: H|$ is power of
a prime $p$. Then $O_{p}(G)\cap H= O_{p}(H)$.
\end{lemma}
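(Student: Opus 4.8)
The statement has two inclusions, one of which is immediate while the other carries all the content. First I would dispose of the easy inclusion: $O_p(G)\cap H$ is a $p$-group (being contained in the $p$-group $O_p(G)$) and is normal in $H$ (being the intersection of $H$ with a subgroup normal in $G$), so $O_p(G)\cap H\le O_p(H)$. Thus the whole problem reduces to proving $O_p(H)\le O_p(G)$, since combined with $O_p(H)\le H$ this yields the reverse inclusion and hence the claimed equality.

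To prove $O_p(H)\le O_p(G)$ I would first reduce to the case $O_p(G)=1$. Set $N=O_p(G)$ and pass to $\bar G=G/N$ with $\bar H=HN/N$; the index $|\bar G:\bar H|$ still divides $p^a$, and $O_p(\bar G)=1$ by the standard fact $O_p(G/O_p(G))=1$. Because $H\cap N$ is a normal $p$-subgroup of $H$ lying inside $O_p(H)$, one checks the routine identity $O_p(\bar H)=O_p(H)/(H\cap N)$ (any normal $p$-subgroup of the quotient pulls back to a normal $p$-subgroup of $H$, being an extension of a $p$-group by the $p$-group $H\cap N$). Hence, once the result is known when the ambient $O_p$ is trivial, applying it to $\bar G\ge\bar H$ gives $O_p(H)/(H\cap N)=O_p(\bar H)\le O_p(\bar G)=1$, i.e. $O_p(H)\le N=O_p(G)$. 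This reduction is entirely formal.

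The heart of the matter is therefore the case $O_p(G)=1$, where I must show $O_p(H)=1$; this is the step in which solvability is indispensable. Since $|G:H|$ is a $p$-power we have $|H|_{p'}=|G|_{p'}$, so $H$ contains a full Hall $p'$-subgroup of the solvable group $G$, and hence contains $D:=O_{p'}(G)$, which is normal in $G$ and thus in $H$. Now both $V:=O_p(H)$ and $D$ are normal in $H$ with coprime orders, so $[V,D]\le V\cap D=1$ and therefore $V\le C_G(D)$. Because $O_p(G)=1$, the Fitting subgroup satisfies $F(G)=\prod_{q\ne p}O_q(G)\le D$, whence $C_G(D)\le C_G(F(G))$; the standard solvable-group fact $C_G(F(G))\le F(G)$ then forces $V\le F(G)\le D$. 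But $V$ is a $p$-group contained in the $p'$-group $D$, so $V=1$, as required.

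The main obstacle is precisely this last case. The reduction to $O_p(G)=1$ is mechanical, but the equality $O_p(H)=1$ genuinely uses that $G$ is solvable: it is exactly the self-centralizing property $C_G(F(G))\le F(G)$ that lets me convert the coprime commuting relation $V\le C_G(O_{p'}(G))$ into the containment $V\le O_{p'}(G)$ and thereby kill the $p$-group $V$. Along the way I would verify only the small standard facts invoked — that a normal $p'$-subgroup lies in every Hall $p'$-subgroup, that $O_p(G/O_p(G))=1$, and that $F(G)$ is self-centralizing in a solvable group.
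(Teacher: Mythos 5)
The paper itself offers no proof of this lemma: it is quoted verbatim from Isaacs--Robinson \cite[Lemma 2]{IR} and used as a black box, so there is no in-paper argument to compare against. Judged on its own, your proof is correct and complete. The easy inclusion $O_p(G)\cap H\le O_p(H)$ is right, the reduction modulo $N=O_p(G)$ is sound (the identity $O_p(H/(H\cap N))=O_p(H)/(H\cap N)$ does follow from $H\cap N\le O_p(H)$ together with the fact that the preimage of a normal $p$-subgroup of the quotient is a normal $p$-subgroup of $H$), and the base case is where solvability enters exactly as you say: $H$ contains a Hall $p'$-subgroup of $G$ and hence $D=O_{p'}(G)$, the coprime normal subgroups $O_p(H)$ and $D$ of $H$ commute, and $C_G(F(G))\le F(G)$ with $F(G)\le D$ kills $O_p(H)$. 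For comparison, the original Isaacs--Robinson argument is shorter because it invokes the standard fact that in a $p$-solvable group a $p$-subgroup normalized by a Hall $p'$-subgroup lies in $O_p(G)$, applied to $O_p(H)$ and a Hall $p'$-subgroup of $G$ inside $H$; your route essentially re-derives the relevant special case of that fact from the self-centralizing property of the Fitting subgroup, at the cost of an extra (but entirely mechanical) reduction to $O_p(G)=1$. Both arguments are valid; yours is more self-contained, theirs leans on a quotable lemma.
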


\begin{lemma}\cite[Theorem 3]{IR} Let $H$ be a maximal subgroup of a solvable group $G$ with index a power of the prime $p$,
and suppose that $K\leq G$ and $K\cong H$. If $O_p(G)\nleq H$, then $K$ is maximal in $G$.
\end{lemma}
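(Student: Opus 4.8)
The plan is to first reduce to the situation where both $H$ and $K$ supplement $O_p(G)$, and then to recast maximality as an irreducibility statement about the $p$-group $O_p(G)$. Since $O_p(G)\not\le H$ and $H$ is maximal, we have $G=O_p(G)H$, and Lemma 2.1 gives $O_p(G)\cap H=O_p(H)$, so that $|O_p(G):O_p(H)|=|G:H|=p^a$ (writing the index as $p^a$). Because $K\cong H$ we have $|K|=|H|$, hence $|G:K|=p^a$ as well; applying Lemma 2.1 to $K$, whose index is again a power of $p$, yields $O_p(G)\cap K=O_p(K)$, and the isomorphism forces $|O_p(K)|=|O_p(H)|$ since $O_p$ is determined by the abstract group. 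Combining these, $|O_p(G):O_p(G)\cap K|=|O_p(G)|/|O_p(H)|=p^a=|G:K|$, from which a routine order count gives $G=O_p(G)K$. Thus $K$, like $H$, supplements $P:=O_p(G)$, with $P\cap K=O_p(K)$.

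Next I would translate maximality into module language. For any subgroup $L$ with $K\le L\le G$, Dedekind's law gives $L=(L\cap P)K$, and $L\cap P$ is a $K$-invariant subgroup of $P$ containing $O_p(K)$; conversely, each $K$-invariant $Q$ with $O_p(K)\le Q\le P$ yields such an $L=QK$, with $QK\cap P=Q$. Hence $K$ is maximal in $G$ if and only if the only $K$-invariant subgroups of $P$ lying between $O_p(K)$ and $P$ are the two extremes. The identical description applies to $H$, and the maximality of $H$ tells us precisely that $P$ admits no proper nontrivial $H$-invariant subgroup above $O_p(H)$. The aim of the proof therefore becomes the transfer of this irreducibility from the $H$-action on $P$ to the $K$-action on $P$.

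To carry out the transfer I would argue by induction on $|G|$. Note that $O_p(G)\ne 1$, for otherwise the hypothesis $O_p(G)\not\le H$ fails, so one may choose a minimal normal subgroup $V$ of $G$ contained in $P$; it is elementary abelian. I would then distinguish cases according to the positions of $V$ relative to $H$ and $K$ and pass to $G/V$, using that the isomorphism $\phi\colon K\to H$ satisfies $\phi(O_p(K))=O_p(H)$ because $O_p$ is characteristic, together with solvability---specifically the coprime action of a Hall $p'$-subgroup on the chief factors of $P$ and Maschke-type complete reducibility---to compare the two module structures. The main obstacle, and the delicate heart of the matter, lies exactly here: the given isomorphism $\phi$ is only abstract and need not carry the $G$-normal subgroup $V$ (nor $V\cap K$) to $V\cap H$, so ``$K\cong H$'' may fail to descend to ``$K/V\cong H/V$'' and the induction does not close automatically. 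Overcoming this requires extracting enough $G$-invariant data from the characteristic subgroups that $\phi$ does respect, so that the comparison of the $H$- and $K$-submodule lattices of $P$ can be forced; this is precisely where the solvability hypothesis is essential.
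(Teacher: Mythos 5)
The paper itself does not prove this statement: it is quoted verbatim from Isaacs and Robinson \cite[Theorem 3]{IR}, so your attempt has to stand on its own, and as it stands it has a genuine gap. Your first two paragraphs are correct and are the standard reductions: writing $P=O_p(G)$, maximality of $H$ together with $P\not\le H$ gives $G=PH$; Lemma 2.1 gives $P\cap H=O_p(H)$ and (since $|G:K|=|G:H|$ is a power of $p$) $P\cap K=O_p(K)$; the order count gives $G=PK$; and Dedekind's law converts maximality of $K$ into the assertion that no $K$-invariant subgroup of $P$ lies strictly between $O_p(K)$ and $P$. But this reformulation merely restates what must be proved, and your third paragraph explicitly declines to prove it: you observe (correctly) that the abstract isomorphism $\phi\colon K\to H$ need not respect a minimal normal subgroup $V$ of $G$, so the induction on $|G|$ ``does not close automatically,'' and you end by saying that one must ``extract enough $G$-invariant data'' without extracting any. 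That transfer of irreducibility from the $H$-action to the $K$-action is precisely the content of the theorem, so nothing beyond the easy supplementation statements has actually been established.

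One concrete piece of $G$-invariant data that your setup already yields, and which is the natural next step, is this: $P\cap H=O_p(H)$ is normal in $H$, and since $P\cap H<P$ we have $N_P(P\cap H)>P\cap H$ (normalizers grow in $p$-groups); hence $N_G(P\cap H)$ properly contains $H$ and equals $G$ by maximality of $H$. Thus $O_p(H)\trianglelefteq G$, and $P/O_p(H)$ is a chief factor of $G$ of order $|G:H|$ complemented by $H/O_p(H)$. The remaining, genuinely nontrivial, task is to relate $O_p(K)=P\cap K$ --- which a priori is normal only in $K$ --- to this $G$-chief factor; for instance, showing $O_p(H)\le K$ would force $O_p(K)=O_p(H)$ by the equality of orders and exhibit $K/O_p(H)$ as a complement to the minimal normal subgroup $P/O_p(H)$ of $G/O_p(H)$, hence maximal. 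Your proposal stops before any such step, so it is a correct setup and an accurate diagnosis of the difficulty, but not a proof.
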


\begin{theorem}\cite[Theorem A]{GR} Let $H$ be a maximal subgroup of the finite solvable group $G$ and suppose that $|G : H|=p^a$ where $p$ is a prime and $a$ is a positive integer. Let $K$ be a subgroup of $G$ which is isomorphic to $H$.
Suppose that $K$ is not maximal in $G$. Then $p \leq 3$, and, for $q =5-p$, we have
$$O_{q'}(H)=O_{q'}(G)=O_{q'}(K)$$ and, for $G^{\ast}= G/O_{q'}(G)$, etc., $H^{\ast}$ and $K^{\ast}$
 are isomorphic subgroups of $G^{\ast}$ with $H^{\ast}$ maximal and $K^{\ast}$ not maximal.
\end{theorem}

\begin{theorem}\cite[Theorem B]{GR} Let $H$ be a maximal subgroup of the finite solvable group $G$ and suppose that
$|G : H|=p^a$ where $p\leq 3$ is a prime and $a$ is a positive integer. Let $K$ be a subgroup of $G$ which is isomorphic to $H$.
 Suppose that $K$ is not maximal in $G$ and that $F(H)$, $F(K)$ and $F(G)$ are all $q$-groups, where $q =5-p$.
 Let $Q$ be a Sylow $q$-subgroup of $H$. Then $G$ has a homomorphic image $G^{\ast}$ such that $H^{\ast}$ and $K^{\ast}$
  (the respective images of $H$ and $K$) are isomorphic subgroups of $G^{\ast}$ with $H^{\ast}$ maximal and $K^{\ast}$ not maximal, and with $F(G^{\ast}), F(H^{\ast})$ and $F(K^{\ast})$ all $q$-groups. Furthermore, $O_{\{2,3\}}(K^{\ast})$ involves $Qd(q)$ and no non-identity characteristic subgroup of $Q^{\ast}$ is normal in $H^{\ast}$.
\end{theorem}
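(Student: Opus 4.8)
The plan is to reduce to a minimal homomorphic image of $G$ that still satisfies the hypotheses and then read off the two extra conclusions from Glauberman's characteristic-subgroup theorem. First I would note that the Fitting hypotheses place us in the characteristic-$q$ setting: since $G$ is solvable and $F(G)$ is a $q$-group, $C_G(F(G)) \le F(G) = O_q(G)$, and moreover $O_{q'}(G)=1$, because a nontrivial normal $q'$-subgroup would contribute a nontrivial nilpotent normal $q'$-subgroup to $F(G)$. The same holds for $H$ and $K$, so the $O_{q'}$-reduction of Theorem 2.3 is already complete. A further useful point is that $|G:H|=p^a$ is prime to $q$, so a Sylow $q$-subgroup $Q$ of $H$ (and likewise of $K$) is in fact a Sylow $q$-subgroup of $G$. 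I would then consider all homomorphic images $G/N$ of $G$ for which the images of $H$ and $K$ still satisfy the full hypotheses of the theorem (maximal, isomorphic, not maximal, all three Fitting subgroups $q$-groups); the trivial image $N=1$ shows this family is nonempty, and I choose one, $G^*=G/N$, of least order.

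The core of the argument is the dichotomy supplied by the remarkable theorem of Glauberman for odd $q$, together with its extension to $q=2$ by Glauberman--Niles and Stellmacher (the results \cite{G, S} already used for Theorem 2.3): for a group of characteristic $q$ with $O_{q'}=1$ and Sylow $q$-subgroup $Q^*$, either the group involves $Qd(q)$, or some nontrivial characteristic subgroup $W$ of $Q^*$ is normal in the whole group. I would apply this to $G^*$. In the second alternative $W$ is characteristic in $Q^* \in \mathrm{Syl}_q(G^*)$ and normal in $G^*$, so $G^*/W$ is a proper homomorphic image; the plan is to show it still lies in the hypothesis class, contradicting the minimality of $G^*$. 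Hence $G^*$ involves $Qd(q)$. To upgrade this to the statement that no nontrivial characteristic subgroup $C$ of $Q^*$ is normal in $H^*$, I would use the maximality of $H^*$: since $C \le Q^* \le H^*$ we have $N_{G^*}(C) \supseteq H^*$, so $N_{G^*}(C)$ is either $H^*$ or $G^*$; the case $N_{G^*}(C)=G^*$ is excluded by the previous reduction, and the case $N_{G^*}(C)=H^*$ is ruled out by transporting $C$ through the isomorphism $K^* \cong H^*$ and comparing the resulting normalizers in $G^*$, using that $K^*$ is not maximal.

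It then remains to place the $Qd(q)$-involvement inside $O_{\{2,3\}}(K^*)$. Here I would transport the characteristic-subgroup condition across the isomorphism $H^* \cong K^*$ so that it holds for $K^*$ and its Sylow $q$-subgroup, and apply Glauberman's theorem to $K^*$ itself. Since $\{p,q\}=\{2,3\}$ and $Qd(q)=(C_q \times C_q)\rtimes \mathrm{SL}_2(q)$ is a $\{2,3\}$-group (it is $S_4$ for $q=2$ and has order $2^3 3^3$ for $q=3$), the section of $K^*$ that Glauberman's construction realizes lies inside the normal $\{2,3\}$-core $O_{\{2,3\}}(K^*)$, which gives the required involvement.

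The step I expect to be the main obstacle is verifying that the quotient $G^*/W$ (and, more generally, each reduction I perform) remains in the hypothesis class --- in particular that the images of $H^*$ and $K^*$ stay isomorphic with one maximal and the other not. Preserving the isomorphism under a quotient is not automatic, since it depends on how $W$ meets $H^*$ and $K^*$, and this is exactly where the minimal-counterexample bookkeeping must be done carefully. A secondary difficulty is the uniform treatment of $q=2$, where the classical $ZJ$-theorem is unavailable and one must instead invoke the Glauberman--Niles/Stellmacher replacement results, whose relevant characteristic subgroup is not simply $Z(J(Q^*))$.
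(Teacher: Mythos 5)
First, a point of orientation: the paper you are working from does not prove this statement at all --- it is quoted verbatim as Theorem B of Glauberman--Robinson \cite{GR} and used as a black box, so the comparison must be with the argument in \cite{GR} rather than with anything in this paper. Your overall strategy (reduce to a minimal homomorphic image in the hypothesis class, then invoke the Glauberman/Stellmacher dichotomy: either $Qd(q)$ is involved, or a non-identity characteristic subgroup of the Sylow $q$-subgroup is normal) is the right one, but the step you yourself flag as ``the main obstacle'' is in fact the heart of the proof, and your sketch contains no mechanism for closing it. The missing idea is canonicity plus a common Sylow subgroup: since $|G:H|=|G:K|=p^a$ with $q\neq p$, both $H$ and $K$ contain Sylow $q$-subgroups of $G$, so after conjugating $K$ one may assume $Q\leq H\cap K$ and, moreover, that the isomorphism $\alpha\colon H\to K$ satisfies $\alpha(Q)=Q$; then $\alpha|_Q\in\mathrm{Aut}(Q)$, so \emph{every} characteristic subgroup $C$ of $Q$ satisfies $\alpha(C)=C$. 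Consequently, if $C\neq 1$ is characteristic in $Q$ and normal in $H$, then $C=\alpha(C)\trianglelefteq K$ as well; since $K\cong H$ forces $|K|=|H|$, $K\leq H$ would give $K=H$, so $K\nleq H$ and maximality of $H$ yields $C\trianglelefteq\langle H,K\rangle=G$, and $\alpha$ induces $H/C\cong K/C$ with maximality/non-maximality preserved by the correspondence theorem. Without arranging $\alpha(Q)=Q$, an abstract ``characteristic subgroup $W$ normal in $G^*$'' gives you no control over $K^*\cap W$, and your quotient $G^*/W$ genuinely may leave the hypothesis class. Note also that the Fitting condition is not preserved automatically by such quotients; it is restored by interleaving the $O_{q'}$-reduction of Theorem A (i.e.\ Theorem 2.3), which is why the conclusion is phrased in terms of some homomorphic image $G^*$ reached by iterating both reductions until no non-identity characteristic subgroup of $Q^*$ is normal in $H^*$.

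The second genuine gap is your final localization step. From ``$Qd(q)$ is a $\{2,3\}$-group'' it does \emph{not} follow that a $Qd(q)$-section of $K^*$ lies inside the $\{2,3\}$-core $O_{\{2,3\}}(K^*)$: for example, the solvable group $C_5\wr S_4=C_5^4\rtimes S_4$ involves $S_4=Qd(2)$, yet $O_{\{2,3\}}(C_5\wr S_4)=1$ (any normal subgroup meeting $F=C_5^4$ trivially centralizes $F$, hence is trivial). To get the involvement inside $O_{\{2,3\}}(K^*)$ one must apply the dichotomy to $M:=O_{\{2,3\}}(K^*)$ itself --- legitimate because $F(K^*)=O_q(K^*)\leq M$, so $C_M(O_q(M))\leq O_q(M)$ and $M$ again has characteristic $q$ --- and then, in the alternative where a Glauberman/Stellmacher subgroup $W$ of a Sylow $q$-subgroup of $M$ is normal in $M$, push its normality up to $K^*$ by a Frattini argument combined with the functoriality of the subgroup $W(\cdot)$ (this is $Z(J(\cdot))$ for $q=3$ and Stellmacher's subgroup for $q=2$; mere ``some characteristic subgroup'' is not invariant enough for the Frattini step), and finally transport through $\alpha^{-1}$ to contradict the already-established property that no non-identity characteristic subgroup of $Q^*$ is normal in $H^*$. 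Your parenthetical worry about $q=2$ is well placed but is handled exactly by Stellmacher's theorem as cited in \cite{S}; it is the two structural gaps above, not the $q=2$ technology, that your sketch would founder on.
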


\begin{remark}By above two theorems, we can find that Question 1.1 holds when $p\geq 5$.
So we will only need to consider the cases when $p\leq 3$. Here, $p$ is a prime satisfied that
$|G: H|=p^a$ where $a$ is a positive integer.
\end{remark}

Now, we will prove Theorem A as follows. This can be seem as a corollary of \cite[Theorem A]{GR} and \cite[Theorem B]{GR}.

\begin{thma*}For each prime $q$, assume that Question 1.1 holds for every characteristic $q$-groups $G$. Then Question 1.1 holds for every
finite solvable groups.
\end{thma*}

\begin{proof} Suppose that $(G, H, K)$ is a counterexample. Since $H$ is maximal in a solvable group
$G$, we can set $|G:H|=p^n$ for some prime $p$ and positive integer $n$.

{\bf Case 1.} $O_p(G)\neq 1$. By \cite[Theorem 3]{IR}, we have $O_{p}(G)\leq H$.
By \cite[Lemma 2]{IR}, we have
$$O_{p}(G)=O_{p}(G)\cap H= O_{p}(H),~~~~O_{p}(G)\cap K= O_{p}(K).$$
Since $H\cong K$, we have $O_{p}(H)\cong O_{p}(K)$. Hence, $O_{p}(G)\leq K$.
Now, we focus on $(G/O_{p}(G), H/O_{p}(G), K/O_{p}(G))$, we can see that
$K/O_{p}(G)$ is maximal in $G/O_{p}(G)$ because $(G, H, K)$ is a counterexample. So $K$ is maximal in $G$.
That is a contradiction.

{\bf Case 2.} $O_p(G)=1$. First, since $G$ is solvable, we have
 $O_{p'}(G)\neq 1$.

By \cite[Theorem A]{GR}, we can see that $O_{q'}(G)=1$ because $(G, H, K)$ is a counterexample.
So the Fitting subgroup $F(G)=O_{q}(G)$ and $O_{q}(G)\neq 1$ because $O_{p'}(G)\neq 1$. Since
$C_{G}(F(G))\leq F(G)$, we have$C_{G}(O_{q}(G))\leq O_{q}(G)$.
It implies $G$ is of characteristic $q$-group. But by the assumption,
we know that Question 1.1 holds for every characteristic $q$-groups $G$.
Hence, that is a contradiction.

So, we complete the proof.
\end{proof}

Now, we will prove Theorem 1.4 as follows.

\begin{theorem}  Let $G$ be a finite solvable group and $G$ has isomorphic subgroup $H$ and $K$.
Let $H$ is maximal subgroup of $G$, we can set $|G: H|=p^n$. Let $p\leq 3$ and $q=5-p$. Let
$Q\in Syl_q(H)$. If $|G|_q\leq q^4$, then $K$ is also maximal.
\end{theorem}

\begin{proof} Suppose that $(G, H, K)$ is a counterexample. Since $H$ is maximal in a solvable group
$G$, we can set $|G:H|=p^n$ for some prime $p$ and positive integer $n$.

{\bf Case 1.} $O_p(G)\neq 1$. By \cite[Theorem 3]{IR}, we have $O_{p}(G)\leq H$.
By \cite[Lemma 2]{IR}, we have
$$O_{p}(G)=O_{p}(G)\cap H= O_{p}(H),~~~~O_{p}(G)\cap K= O_{p}(K).$$
Since $H\cong K$, we have $O_{p}(H)\cong O_{p}(K)$. Hence, $O_{p}(G)\leq K$.

Now, we focus on $(G/O_{p}(G), H/O_{p}(G), K/O_{p}(G))$.
Since $H/O_{p}(G)\cong K/O_{p}(G)$ and $|G/O_{p}(G)|_q=|G|_q\leq q^4$,  we can see that
$K/O_{p}(G)$ is maximal in $G/O_{p}(G)$ because $(G, H, K)$ is a counterexample. So $K$ is maximal in $G$.
That is a contradiction.

{\bf Case 2.} $O_p(G)=1$. First, since $G$ is solvable, we have
 $O_{p'}(G)\neq 1$.

By \cite[Theorem A]{GR}, we can see that $O_{q'}(G)=1$ because $(G, H, K)$ is a counterexample.
So $F(G)=O_{q}(G)$ and $O_{q}(G)\neq 1$ because $O_{p'}(G)\neq 1$. Here, we have $C_{G}(O_{q}(G))\leq O_{q}(G)$.
Since $|G:H|=p^n$, we have $O_{q}(G)\leq H$. Similarly, $O_{q}(G)\leq K$.
By the assumption $|G|_q\leq q^4$, we can discuss as follows.

{\bf Case 2.1.} $|O_{q}(G)|=q^4$. Since $H\cong K$, we can set an isomorphic map $\alpha: K\to H$.
So $\alpha$ set $O_{q}(G)$ to $\alpha(O_{q}(G))$. Here, $\alpha(O_{q}(G))\leq H, O_{q}(G)\leq H$.
Hence $\alpha(O_{q}(G))O_{q}(G)=O_{q}(G)$ because $|G|_q\leq q^4$. So $\alpha(O_{q}(G))=O_{q}(G)$.
Then we can consider $(G/O_{q}(G), H/O_{q}(G), K/O_{q}(G))$.
Since $H/O_{q}(G)= H/\alpha(O_{q}(G)\cong K/O_{q}(G)$, we have $K/O_{q}(G)$ is maximal in $G/O_{q}(G)$
because $(G, H, K)$ is a counterexample. So $K$ is maximal in $G$. That is a contradiction.

{\bf Case 2.2.}  $|O_{q}(G)|=q^3$. Since $H\cong K$, we can set an isomorphic map $\alpha: K\to H$.
So $O_{q}(G)$ is sent to $\alpha(O_{q}(G))$ by map $\alpha$.
If $\alpha(O_{q}(G))=O_{q}(G)$, then we can consider $(G/O_{q}(G), H/O_{q}(G), K/O_{q}(G))$.
Since $H/O_{q}(G)= H/\alpha(O_{q}(G))\cong K/O_{q}(G)$, we have $K/O_{q}(G)$ is maximal in $G/O_{q}(G)$
because $(G, H, K)$ is a counterexample. So $K$ is maximal in $G$. That is a contradiction.
Hence,  $\alpha(O_{q}(G))\neq O_{q}(G)$, we have
$$\alpha(O_{q}(G))O_{q}(G)\gneq O_q(G).$$
Since $|O_{q}(G)|=q^3$ and $|G|_q\leq q^4$, we have
$\alpha(O_{q}(G))O_{q}(G)\in \mathrm{Syl}_q(G)$.
Set $Q:=\alpha(O_{q}(G))O_{q}(G)$, we have $\alpha^{-1}(Q)\in\mathrm{Syl}_q(G)$. There exists
$g\in G$ such that $Q=\alpha^{-1}(Q)^g$. Now we can consider $(G, H, K^g)$.
We have $Q=\alpha^{-1}(Q)^g\leq K^g$. So $Q$ is sent to $Q$ by morphism 
$$\xymatrix@C=0.5cm{
  K^g \ar[rr]^{c_{g^{-1}}} && K \ar[rr]^{\alpha} && H}.$$
Since $Q\unlhd H$, we have $Q\unlhd K^g$. If $K^g\leq H$, we can see that $K^g$ is maximal in $G$. That is a contradiction.
Hence,  $K^g\nleq H$. So $Q\unlhd G$. Now,
we can consider $(G/Q, H/Q, K^g/Q)$. Since
$$K^g/Q\cong K/\alpha^{-1}(Q)\cong H/Q,$$
we have $K^g$ is maximal in $G$. That is contradiction.

{\bf Case 2.3.}  $|O_{q}(G)|\leq q^2$. By similar reason of the above case, we
can set $\alpha(O_{q}(G))\neq O_{q}(G)$ and $1\neq\alpha(O_{q}(G))\cap O_{q}(G)\lneq O_{q}(G)$.
 Set $N_1=\alpha(O_{q}(G))\cap O_{q}(G)$ and
$N_2=\alpha(N_1)\cap O_{q}(G)$. It is easy to see that
$N_2\leq N_1$. Since $|O_{q}(G)|\leq q^2$,
we have either $N_2=1$ or $N_2=N_1$.

If  $N_2=N_1$, we have $N_1=N_2=\alpha(N_1)\cap O_{q}(G)$. So $N_1=\alpha(N_1)$.
Since $N_1\unlhd H$, we have $N_1\unlhd K$. So, $N_1\unlhd G$.
Now, we consider $(G/N_1, H/N_1, K/N_1)$, we have $K$ is maximal in $G$. That is contradiction.

If $N_2=1$, we have
$\alpha(N_1)\cap O_{q}(G)=1$. But $G/O_{q}(G)$ is isomorpical to a subgroup of $\mathrm{Aut}(O_{q}(G))$, we have $|G|_q\leq q^3$.
Hence $\alpha(O_{q}(G))O_{q}(G)\in \mathrm{Syl}_q(G)$. So, by the similar reason of above case, we can get
a contradiction.

So, we complete the proof.
\end{proof}

Now, we will prove Theorem 1.5 as follows.
First, recall that if $G$ is a $p$-soluble group, the $p$-length $l_p(G)$ is
the number of factors
of the lower $p$-series of $G$ that are $p$-groups(see \cite[p.227]{Go}).
 \begin{theorem} Let $G$ be a finite solvable group and $G$ has isomorphic subgroup $H$ and $K$.
Let $H$ is maximal subgroup of $G$. If $l_p(G)\leq 1$, then $K$ is also maximal.
\end{theorem}

\begin{proof} Suppose that $(G, H, K)$ is a counterexample. Since $H$ is maximal in a solvable group
$G$, we can set $|G:H|=p^n$ for some prime $p$ and positive integer $n$.

{\bf Case 1.} $O_p(G)\neq 1$. By \cite[Theorem 3]{IR}, we have $O_{p}(G)\leq H$.
By \cite[Lemma 2]{IR}, we have
$$O_{p}(G)=O_{p}(G)\cap H= O_{p}(H),~~~~O_{p}(G)\cap K= O_{p}(K).$$
Since $H\cong K$, we have $O_{p}(H)\cong O_{p}(K)$. Hence, $O_{p}(G)\leq K$.
Now, we focus on $(G/O_{p}(G), H/O_{p}(G), K/O_{p}(G))$. 

Since $l_p(G)\leq 1$, we have $SO_{p'}(G)\unlhd G$ for some Sylow $p$-subgroup of $G$.
We can see that $O_{p'}(G)\leq O_{p, p'}(G)$, so
$$SO_{p, p'}(G)=SO_{p'}(G)O_{p, p'}(G)\unlhd G.$$
Hence $l_{p}(G/O_{p}(G))\leq 1$. So, we can see that
$K/O_{p}(G)$ is maximal in $G/O_{p}(G)$ because $(G, H, K)$ is a counterexample. Hence, $K$ is maximal in $G$.
That is a contradiction.

{\bf Case 2.} $O_p(G)=1$. Since $G$ is solvable, we have
$F(G)\leq O_{p'}(G)\neq 1$. And $C_{G}(O_{p'}(G))\leq O_{p'}(G)$.

Now, we assert that $O_{p'}(G)\leq H$. If $O_{p'}(G)\nleq H$, thus $O_{p'}(G)\cap H\lneq O_{p'}(G)$.
By $$\frac{|HO_{p'}(G)|}{|H|}=\frac{|O_{p'}(G)|}{|O_{p'}(G)\cap H|},$$
we have $r||HO_{p'}(G): H|$ for some prime $r$ which is not $p$. That is a contradiction to $|G: H|=p^n$.

Hence, $O_{p'}(G)\leq H$. Similarly, we have $O_{p'}(G)\leq K$ because $|G:K|=|G:H|=p^n$.

First, we assert that $O_{p'}(G)$ is not a Hall $p'$-subgroup of $G$.
Else, $H/O_{p'}(G)\cong K/O_{p'}(G)$. Then we can get a contradiction by induction.

Since $l_p(G)\leq 1$, for each $S\in \mathrm{Syl}_p(G)$, we have $T:=SO_{p'}(G)=O_{p',p}(G)\unlhd G$.
And $|G: H|=p^n$, we have $T\nleq H$. Similarly, $T\nleq K.$
Now, we can see that
$$H\cap T=H\cap SO_{p'}(G)=(H\cap S)O_{p'}(G)\unlhd H.$$
Since $S\nleq H$, thus $N_{S}(H\cap S)\gneq H\cap S$.
So let $x\in N_{S}(H\cap S)- H\cap S$, then
$$((H\cap S)O_{p'}(G))^x=(H\cap S)^x O_{p'}(G)=(H\cap S)O_{p'}(G).$$
But $x\notin H$ and $H$ is maximal in $G$. Hence,
we have $$(H\cap S)O_{p'}(G)\unlhd G$$ because $(H\cap S)O_{p'}(G)\unlhd H.$

Let $R\in \mathrm{Syl}_p(H)$, there exists $t\in G$ such that
$R\leq S^t$. For $S^t$, we have $S^tO_{p'}(G)=SO_{p'}(G)\unlhd G$. Then
$$(H\cap S^t)O_{p'}(G)\unlhd G$$
and $H\cap S^t\geq R$. So $H\cap S^t=R\in  \mathrm{Syl}_p(H)$.

Now, we replace $S^t$ by $S$. That means
$$(H\cap S)O_{p'}(G)\unlhd G~
\mathrm{and} ~H\cap S \in  \mathrm{Syl}_p(H).$$

{\bf Case 2.1.} $(K\cap S)O_{p'}(G)\leq H.$ Then $(K\cap S)O_{p'}(G)\leq (H\cap S)O_{p'}(G).$
We know that $G=KSO_{p'}(G)=HSO_{p'}(G)$ and $SO_{p'}(G)\unlhd G$. So
$$K/((K\cap S)O_{p'}(G))\cong G/SO_{p'}(G)\cong H/((H\cap S)O_{p'}(G)).$$
Since $K\cong H$, we have $|(K\cap S)O_{p'}(G)|=|(H\cap S)O_{p'}(G)|$. Then
$$(K\cap S)O_{p'}(G)=(H\cap S)O_{p'}(G).$$
Now, for $(G/((H\cap S)O_{p'}(G)), K/((H\cap S)O_{p'}(G)), H/((H\cap S)O_{p'}(G)))$, we assert that
 $l_p(G/((H\cap S)O_{p'}(G)))\leq 1$. 
 Since $$O_{p'}(G/((H\cap S)O_{p'}(G)))\cdot SO_{p'}(G)/((H\cap S)O_{p'}(G))\unlhd G/((H\cap S)O_{p'}(G)),$$
we have $l_p(G/((H\cap S)O_{p'}(G)))\leq 1$.
So $K/((H\cap S)O_{p'}(G))$ is maximal in $G/((H\cap S)O_{p'}(G))$. Hence,
$K$ is maximal in $G$. That is a contradiction.

{\bf Case 2.2.} $(K^u\cap S)O_{p'}(G)\nleq H$ for each $u\in G$.
Since $H$ is maximal in $G$, we have $((K^u\cap S)O_{p'}(G))H=G$.
We assert that $K^u(H\cap S)=G$.
Since
$$|G|=|H(K^u\cap S)|=\frac{|H||K^u\cap S|}{|K^u\cap H\cap S|}$$
for each $u\in G$, we can choose $u_0$ such that $K^{u_0}\cap S\in \mathrm{Syl}_p(K^{u_0}).$
So $$\frac{|H||K^{u_0}\cap S|}{|K^{u_0}\cap H\cap S|}=\frac{|K^{u_0}||H\cap S|}{|K^{u_0}\cap H\cap S|}=|K^{u_0}(H\cap S)|$$
because $K^{u_0}\cong H$. Hence, $K^{u_0}(H\cap S)=G$.

Now, we replace $K^{u_0}$ by $K$. That means $K(H\cap S)=G.$
Set $V=(H\cap S)O_{p'}(G)$ which is a normal subgroup of $G$.
Set $Y:=H\cap K$ and $\alpha(Y)=X$ where $\alpha: K\to H$ is an isomorphic map.
First, we assert that $Y$ is maximal in $K$. Since $KV=G$, there exists an isomorphism $\phi: G/V\to K/K\cap V$.
And we can see that $\phi(H/V)=(H\cap K)/(K\cap V)= Y/(K\cap V).$ Since $H/V$ is maximal in $G/V$,
we have $Y/(K\cap V)$ is maximal in $K/(K\cap V).$ Hence, $Y$ is maximal in $K$, as wanted.

Then $X$ is maximal in $H$. Since $H\cap S\in \mathrm{Syl}_p(H)$ and $(H\cap S)O_{p'}(G)\unlhd H$, we have
$l_p(H)\leq 1$. By induction we have $Y$ is also maximal in $H$.

 Let $K\leq L\lneq G$. Then $H\geq L\cap H\geq H\cap K=Y$. If $L\cap H=H$, then $H\leq L$. So
 $L=G$. Hence, $L\cap H= H\cap K$. And
 $$L=L\cap G= L\cap KV=K(L\cap V)=K(L\cap ((H\cap S)O_{p'}(G))).$$
 But $L\cap ((H\cap S)O_{p'}(G))=(L\cap (H\cap S))O_{p'}(G)=(K\cap H\cap S)O_{p'}(G)\leq K$.
 Hence, $L\leq K$. That means $K$ is maximal in $G$. That is a contradiction.

So, we complete the proof.
\end{proof}

\section{\bf Notation of fusion systems, and proof of Theorem B}
In this section we collect some known results that will be needed later.  For the background theory of
fusion systems, we refer to \cite{AsKO, BLO1, BLO2}.

\begin{definition} A $fusion ~ system$ $\mathcal{F}$ over a finite $p$-group
$S$ is a category whose objects are the subgroups of $S$, and whose
morphism sets $\mathrm{Hom}_{\mathcal{F}}(P,Q)$ satisfy the
following two conditions:

\vskip 0.3cm

(a) $\mathrm{Hom}_{S}(P,Q)\subseteq
\mathrm{Hom}_{\mathcal{F}}(P,Q)\subseteq\mathrm{Inj}(P,Q)$ for all
$P,Q\leq S$.

\vskip 0.3cm

(b) Every morphism in $\mathcal{F}$ factors as an isomorphism in
$\mathcal{F}$ followed by an inclusion.
\end{definition}

\begin{definition} Let $\mathcal{F}$ be a fusion system over a $p$-group
$S$.

$\bullet$ Two subgroups $P,Q$ are $\mathcal{F}$-$conjugate$ if they
are isomorphic as objects of the category $\mathcal{F}$. Let
$P^{\mathcal{F}}$ denote the set of all subgroups of $S$ which are
$\mathcal{F}$-conjugate to $P$.
Since $\mathrm{Hom}_{\mathcal{F}}(P,P)\subseteq\mathrm{Inj}(P,P)$,
we usually write $\mathrm{Hom}_{\mathcal{F}}(P,P)=\mathrm{Aut}_{\mathcal{F}}(P)$ and
$\mathrm{Hom}_{S}(P,P)=\mathrm{Aut}_{S}(P)$.

$\bullet$ A subgroup $P\leq S$ is $fully~automised$ in $\mathcal{F}$
if $\mathrm{Aut}_{S}(P)\in
\mathrm{Syl}_{p}(\mathrm{Aut}_{\mathcal{F}}(P))$.

$\bullet$ A subgroup $P\leq S$ is $receptive$ in $\mathcal{F}$ if it
has the following property: for each $Q\leq S$ and each $\varphi\in
\mathrm{Iso}_{\mathcal{F}}(Q, P)$, if we set
$$N_{\varphi}=\{g\in N_{S}(Q)|\varphi \circ c_{g}\circ \varphi^{-1}\in \mathrm{Aut}_{S}(P)\},$$
then there is $\overline{\varphi}\in
\mathrm{Hom}_{\mathcal{F}}(N_{\varphi},S)$ such that
$\overline{\varphi}|_{Q}=\varphi$. (where $c_{g}:x\longmapsto g^{-1}xg$ for $g\in S$)

$\bullet$ A fusion system $\mathcal{F}$ over a $p$-group $S$ is
$saturated$ if each subgroup of $S$ is $\mathcal{F}$-conjugate to a
subgroup which is fully automised and receptive.

\end{definition}

\begin{definition} Let $\mathcal{F}$ be a fusion system over a $p$-group
$S$.

$\bullet$ A subgroup $P\leq S$ is $fully~normalized$ in
$\mathcal{F}$ if $|N_{S}(P)|\geq |N_{S}(Q)|$ for all $Q\in
P^{\mathcal{F}}$.

$\bullet$ A subgroup $P\leq S$ is $\mathcal{F}$-$centric$ if
$C_{S}(Q)=Z(Q)$ for $Q\in P^{\mathcal{F}}$.

$\bullet$ Let $\mathcal{F}^{c}$ denote the full subcategory of $\mathcal{F}$ whose objects are
$\mathcal{F}$-centric,

$\bullet$ Let $\mathcal{F}^{f}$ denote the full subcategory of $\mathcal{F}$ whose objects are
 fully~normalized in $\mathcal{F}$.

$\bullet$ A subgroup $P\leq S$ is $normal$ in $\mathcal{F}$ (denoted
$P\trianglelefteq \mathcal{F}$) if for all $Q,R\in S$ and all
$\varphi\in \mathrm{Hom }_{\mathcal{F}}(Q,R)$, $\varphi$ extends to
a morphism $\overline{\varphi}\in \mathrm{Hom
}_{\mathcal{F}}(QP,RP)$ such that $\overline{\varphi}(P)=P$.
Moreover, $O_{p}(\mathcal{F})$ denotes the largest subgroup of $S$
which is normal in $\mathcal{F}$.
\end{definition}

\begin{definition}\cite[I, Definition 6.1]{AsKO} Let $\mathcal{F}$ a saturated fusion system over a finite
$p$-group $S$. Let $\mathcal{E}$ be a subsystem of $\mathcal{F}$ over a subgroup $T$ of $S$.

$\bullet$ Define $\mathcal{E}$ to be $\mathcal{F}$-invariant if:

(I1) $T$ is strongly closed in $S$ with respect to $\mathcal{F}$;

(I2) For each $P\leq Q\leq T$, $\phi\in \mathrm{Hom}_{\mathcal{E}}(P, Q)$, and
$\alpha\in \mathrm{Hom}_{\mathcal{F}}(Q, S)$, $\phi^{\alpha}\in \mathrm{Hom}_{\mathcal{E}}(\alpha(P), T)$.

If $\mathcal{E}$ is saturated, we call that

$\bullet$ A subsystem $\mathcal{E}\subseteq \mathcal{F}$ is weakly normal in $\mathcal{F}$
($\mathcal{E}\dot{\unlhd} \mathcal{F}$) if $\mathcal{E}$ is saturated and $\mathcal{E}$
is $\mathcal{F}$-invariant.

$\bullet$ A weakly normal subsystem $\mathcal{E}\dot{\unlhd} \mathcal{F}$ is  normal in $\mathcal{F}$ if:

(N1) Each $\phi\in \mathrm{Aut}_{\mathcal{E}}(T)$ extends to $\hat{\phi}\in \mathrm{Aut}_{\mathcal{F}}(TC_S(T))$ such
that $[\hat{\phi}, C_S(T)]\leq Z(T)$.

We write $\mathcal{E}\unlhd \mathcal{F}$ to indicate that $\mathcal{E}$ is normal in $\mathcal{F}$.

$\bullet$ $\mathcal{F}$ is simple if it contains no proper nontrivial normal fusion subsystem.

$\bullet$ Define $O^p(\mathcal{F})$ to be the minimal normal subsystem of $\mathcal{F}$ which has $p$-power index in $\mathcal{F}$
(See \cite[I, Theorem 7.4]{AsKO}).

$\bullet$ Define $O^{p'}(\mathcal{F})$ to be the minimal normal subsystem of $\mathcal{F}$ which has index prime to $p$ in $\mathcal{F}$.
\end{definition}

Now, we introduce  constrained fusion systems.  For the theory of
constrained fusion systems, we refer to \cite{AsKO, BCGLO, BLO2}. And the definition of component of fusion system is
due to \cite{As5, As6}.

\begin{definition}\cite{AsKO, BCGLO} A saturated fusion system
$\mathcal{F}$ is $constrained$ if $\mathcal{F}$ contains a normal centric $p$-subgroup, i.e.,
$O_{p}(\mathcal{F})$ is centric.
\end{definition}

\begin{theorem}(Model theorem for constrained fusion systems \cite[III, 5.10]{AsKO},\cite{BCGLO}. Let $\mathcal{F}$ be a
constrained, saturated fusion system over a $p$-group $S$. Fix $Q\in \mathcal{F}^c$ such that $Q\unlhd \mathcal{F}$. Then
the following hold.

(a) There is a model for $\mathcal{F}$: a finite group $G$ with $S\in \mathrm{Syl}_p(G)$ such that $Q\unlhd G$, $C_G(Q) \leq Q,$
and $\mathcal{F}_S(G) = \mathcal{F}$.

(b) For any finite group $G$ such that $S\in \mathrm{Syl}_p(G)$ such that $Q\unlhd G$, $C_G(Q) \leq Q,$ and
$\mathrm{Aut}_{G}(Q) = \mathrm{Aut}_{\mathcal{F}}(Q)$, there is $\beta \in \mathrm{Aut}(S)$ such that
$\beta|_Q = \mathrm{Id}_Q$ and $\mathcal{F}_S(G) =~ ^{\beta}\mathcal{F}$.

(c) The model $G$ is unique in the following strong sense: if $G_1, G_2$ are two finite groups such
that $S\in \mathrm{Syl}_p(G_i)$, $Q\unlhd G_i$, $\mathcal{F}_S(G_i) = \mathcal{F}$,
and $C_{G_i}(Q) \leq Q,$ for $i = 1, 2$, then there is an
isomorphism $\psi: G_1\longrightarrow G_2$ such that  $\psi|_S = \mathrm{Id}_S.$ If $\psi$  and  $\psi'$ are two such isomorphisms,
then  $\psi' = \psi \circ c_z$ for some $z\in Z(S)$.
\end{theorem}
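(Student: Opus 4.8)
The plan is to prove the three assertions in turn, treating the existence statement (a) as the substantive part and deriving (b) and (c) from the rigidity of the underlying group extension. Throughout I would write $G_0=\mathrm{Aut}_{\mathcal{F}}(Q)$, $V=Z(Q)$, and $W=\mathrm{Out}_{\mathcal{F}}(Q)=G_0/\mathrm{Inn}(Q)$. Since $Q$ is $\mathcal{F}$-centric we have $C_S(Q)=V$, and since $Q\unlhd\mathcal{F}$ the subgroup $Q$ is fully normalized, hence fully automized, so $\mathrm{Aut}_S(Q)=S/V$ is a Sylow $p$-subgroup of $G_0$ and $P_0:=S/Q$ is a Sylow $p$-subgroup of $W$. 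These identifications are what will let me compare the abstract extension problem with the concrete group $S$.

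For (a), I would first build an abstract finite group $G$ realizing the data, and only afterwards check that its fusion system is $\mathcal{F}$. The group should fit into a short exact sequence $1\to V\to G\xrightarrow{\pi}G_0\to 1$ in which $\pi^{-1}(\mathrm{Inn}(Q))=Q$ recovers the canonical central extension $1\to V\to Q\to\mathrm{Inn}(Q)\to 1$, with $G_0\le\mathrm{Aut}(Q)$ acting on $V$ in the natural way. By Eilenberg--MacLane obstruction theory, the only obstruction to extending the prescribed class over $\mathrm{Inn}(Q)$ to one over $G_0$ is a class $o\in H^3(W;V)$. The key point, which I expect to be the main obstacle, is to show $o=0$. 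Because $V$ is a $p$-group, $H^{\ast}(W;V)$ is $p$-torsion and $\mathrm{res}^W_{P_0}$ is injective on it (as $[W:P_0]$ is prime to $p$); and the restriction of $o$ to $P_0$ is the obstruction to extending the class over $\mathrm{Inn}(Q)$ to the preimage $\mathrm{Aut}_S(Q)=S/V$ of $P_0$, which is already realized by the group $S$ itself (here $Q\unlhd S$, $C_S(Q)=V$, $\mathrm{Aut}_S(Q)=S/V$). Hence $\mathrm{res}^W_{P_0}(o)=0$, so $o=0$ and $G$ exists. One then identifies a Sylow $p$-subgroup of $G$ with $S$ (the orders match and it maps onto $\mathrm{Aut}_S(Q)$), and observes $Q\unlhd G$ with $C_G(Q)=\ker\pi=V\le Q$ and $\mathrm{Aut}_G(Q)=G_0=\mathrm{Aut}_{\mathcal{F}}(Q)$.

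To finish (a) I would show $\mathcal{F}_S(G)=\mathcal{F}$ via the principle that a constrained system with $Q$ normal and centric is determined by $\mathrm{Aut}_{\mathcal{F}}(Q)$: every morphism extends to one between subgroups containing $Q$, and since $C_S(Q)=V\le Q$ such a subgroup is detected faithfully by its action on $Q$, so a morphism is pinned down by its restriction to $Q$, which lies in $\mathrm{Aut}_{\mathcal{F}}(Q)=\mathrm{Aut}_G(Q)$; thus $\mathcal{F}$ and $\mathcal{F}_S(G)$ have the same morphisms. The same determination principle yields (b): for any $G$ with $S\in\mathrm{Syl}_p(G)$, $Q\unlhd G$, $C_G(Q)\le Q$ and $\mathrm{Aut}_G(Q)=\mathrm{Aut}_{\mathcal{F}}(Q)$, the system $\mathcal{F}_S(G)$ is constrained with the same normal centric $Q$ and the same automizer, so it agrees with $\mathcal{F}$ after conjugating by the automorphism $\beta\in\mathrm{Aut}(S)$ recording the only possible discrepancy in how $S$ is glued on; $\beta$ fixes $Q$ pointwise because both systems induce the same action there.

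For (c) I would construct $\psi\colon G_1\to G_2$ directly: set $\psi=\mathrm{id}$ on $Q$ and then on $S$ (legitimate since $\mathcal{F}_S(G_1)=\mathcal{F}=\mathcal{F}_S(G_2)$ forces the two conjugation actions of $S$ to coincide). For general $g\in G_1$, the automorphism $c_g|_Q$ lies in $\mathrm{Aut}_{\mathcal{F}}(Q)=\mathrm{Aut}_{G_2}(Q)$; I would lift it to $G_2$, noting the lift is unique modulo $C_{G_2}(Q)=V\le Q$, and use the fixed identification on $S$ to remove the remaining ambiguity, then check $\psi$ is a well-defined isomorphism restricting to the identity on $S$. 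Finally, two such isomorphisms differ by an automorphism of $G_2$ trivial on $S$; any such automorphism is conjugation by an element of $C_{G_2}(S)$, and $C_{G_2}(S)\le C_{G_2}(Q)=V\le S$ forces $C_{G_2}(S)=Z(S)$, so $\psi'=\psi\circ c_z$ for some $z\in Z(S)$, as claimed. The real work is concentrated in the vanishing of $o$ in (a); everything else is bookkeeping around the slogan that a constrained fusion system is its automizer $\mathrm{Aut}_{\mathcal{F}}(Q)$.
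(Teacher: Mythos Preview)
The paper does not prove this theorem: it is quoted with attribution to \cite[III, 5.10]{AsKO} and \cite{BCGLO} and then used as a black box in the proofs of Theorem~B and Theorem~3.8. There is therefore no in-paper argument to compare against. Your sketch is essentially the standard proof from those references, built on the Eilenberg--MacLane obstruction in $H^3(W;V)$ together with the transfer trick showing that restriction to the Sylow subgroup $P_0\le W$ is injective on $p$-torsion cohomology.

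One point in (a) is glossed over and is worth flagging. Once the $H^3$ obstruction vanishes you know that \emph{some} extension $1\to V\to G\to G_0\to 1$ compatible with $1\to V\to Q\to\mathrm{Inn}(Q)\to 1$ exists, but such extensions form a torsor under $H^2(W;V)$, and you still have to select the class whose pullback over $\mathrm{Aut}_S(Q)$ is the extension $S$ itself. Your sentence ``the orders match and it maps onto $\mathrm{Aut}_S(Q)$'' does not do this: distinct classes in $H^2(P_0;V)$ give nonisomorphic $p$-groups of the same order sitting over $\mathrm{Aut}_S(Q)$. The cited references handle this carefully; the same injectivity-of-restriction argument in degree~$2$ is what pins the class down, and in fact this observation \emph{is} part~(c): two models are two extensions with the same restriction over $P_0$, hence equal in the torsor, hence isomorphic as extensions by an isomorphism that is the identity on $S$. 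Invoking that directly would also streamline your treatment of~(c), where the element-by-element lift of $c_g|_Q$ leaves an ambiguity in $V=Z(Q)$ that is not obviously resolved ``by the fixed identification on $S$'' when $g\notin S$.
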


\begin{theorem}\cite[Theorem 1]{As5}  Let $\mathcal{F}$ be a
constrained, saturated fusion system over a finite $p$-group $S$, $G$ a model of $\mathcal{F}$
and $\mathcal{E}\unlhd \mathcal{F}$. Then there is a unique normal subgroup of $G$ which is
a model of $\mathcal{E}$.
\end{theorem}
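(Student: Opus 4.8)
The plan is to reduce the statement to the Model Theorem (Theorem 3.6) in three moves: first show that the normal subsystem $\mathcal{E}$ is again constrained, then realise its model as a normal subgroup of $G$, and finally prove uniqueness. Throughout write $Q=O_p(\mathcal{F})$ and $R=O_p(\mathcal{E})$, and let $T\le S$ be the (strongly closed, hence normal in $S$) subgroup over which $\mathcal{E}$ is defined. Since $R=O_p(\mathcal{E})$ is characteristic in $\mathcal{E}$ and $\mathcal{E}\unlhd\mathcal{F}$, the subgroup $R$ is normal in $\mathcal{F}$; hence $R\le Q$, and in the model $G$ this forces $R\unlhd G$. As $R\unlhd\mathcal{F}$ is strongly closed it is the unique member of its $\mathcal{E}$-class, and since $Q\cap T$ is likewise normal in $\mathcal{E}$ one in fact gets $R=Q\cap T$.

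First I would prove that $\mathcal{E}$ is constrained, which is what makes Theorem 3.6 applicable to it. By Definition 3.5 this amounts to $R$ being $\mathcal{E}$-centric, equivalently (by the uniqueness of $R$ in its class) to $C_T(R)=Z(R)$. I would obtain this from the component theory of \cite{As5, As6}: that $\mathcal{F}$ is constrained is equivalent to $F^{\ast}(\mathcal{F})=O_p(\mathcal{F})$, i.e. to $\mathcal{F}$ having no components; since every component of a normal subsystem is a component of the ambient system, $\mathcal{E}$ also has no components, so $F^{\ast}(\mathcal{E})=O_p(\mathcal{E})=R$ and $\mathcal{E}$ is constrained. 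Theorem 3.6 then yields an abstract model $M$ of $\mathcal{E}$ with $O_p(M)=R$, $T\in\mathrm{Syl}_p(M)$, $C_M(R)\le R$ and $\mathcal{F}_T(M)=\mathcal{E}$.

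Next I would realise $M$ inside $G$. Because $R\unlhd G$ we have $\mathrm{Aut}_{\mathcal{F}}(R)=\mathrm{Aut}_G(R)=G/C_G(R)$, and $\mathcal{E}\dot{\unlhd}\mathcal{F}$ forces $\mathrm{Aut}_{\mathcal{E}}(R)\unlhd\mathrm{Aut}_G(R)$; let $P\unlhd G$ be the full preimage of $\mathrm{Aut}_{\mathcal{E}}(R)$, so that $C_G(R)\le P$. The subgroup I want is a normal complement to the ``centraliser part'' inside $P$: concretely I would build $N$ as the subgroup of $G$ generated by $T$ together with lifts to $N_G(U)$ of the groups $\mathrm{Aut}_{\mathcal{E}}(U)\le\mathrm{Aut}_{\mathcal{F}}(U)=\mathrm{Aut}_G(U)$, for $U$ running over the fully normalised $\mathcal{E}$-centric subgroups that generate $\mathcal{E}$ by Alperin's fusion theorem. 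Normality of $N$ in $G$ comes from the $\mathcal{F}$-invariance condition (I2) of Definition 3.4, which keeps this generating set stable under $G$-conjugation. One then has to check that $N$ is genuinely a model of $\mathcal{E}$, namely that $N\cap S=T\in\mathrm{Syl}_p(N)$, that $N\cap C_G(R)=Z(R)$, and that $\mathcal{F}_T(N)=\mathcal{E}$. I expect this verification to be the main obstacle: the difficulty is that $R$ need not be $\mathcal{F}$-centric (indeed $C_S(R)\ge C_S(Q)=Z(Q)$ may properly exceed $R$), so the extra $p$-elements of $C_S(R)$ must be excluded from $N$ and shown to contribute no new fusion. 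This is exactly the extension problem solved in the construction underlying Theorem 3.6, and I would import that construction rather than redo it, using the rigidity clause Theorem 3.6(c) to pin $N$ down up to the indeterminacy that $Z(S)$ permits.

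Finally I would prove uniqueness. If $N_1,N_2\unlhd G$ are both models of $\mathcal{E}$, then each has $O_p(N_i)=R$, $C_{N_i}(R)=Z(R)$ and $N_i\cap S=T\in\mathrm{Syl}_p(N_i)$, whence $N_iC_G(R)=P$ and $N_i\cap C_G(R)=Z(R)$ for the same $P$ as above. Since $T\le N_1\cap N_2$ with $T$ Sylow in each $N_i$, $T$ is Sylow in $N_1\cap N_2$, and a Sylow count gives $T\in\mathrm{Syl}_p(N_1N_2)$. Any $\mathcal{F}_T(N_1N_2)$-morphism is an $\mathcal{F}$-morphism realised inside the normal subgroup $N_1N_2$; factoring a conjugation by an element of $N_1N_2$ through the two factors and applying (I2) shows it already lies in $\mathcal{E}$, so $\mathcal{F}_T(N_1N_2)=\mathcal{E}$ and $N_1N_2$ is again a model of $\mathcal{E}$. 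As all models of $\mathcal{E}$ are isomorphic by Theorem 3.6(c) and hence have equal order, the containment $N_1\le N_1N_2$ forces $N_1=N_1N_2=N_2$. This produces the unique normal subgroup of $G$ that is a model of $\mathcal{E}$, completing the plan.
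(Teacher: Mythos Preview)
The paper does not prove this statement at all; it is quoted verbatim as \cite[Theorem 1]{As5} and used as a black box in the proofs of Theorem B and Theorem 3.8. So there is no ``paper's proof'' to compare against, and what you have written is an attempted outline of Aschbacher's original argument.

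As an outline it has the right shape, but two of the three steps have genuine gaps. In Step 2 you concede that constructing the normal subgroup $N\le G$ with $\mathcal{F}_T(N)=\mathcal{E}$ and $C_N(R)\le R$ is ``the main obstacle'' and propose to ``import'' the construction from Theorem 3.6. But Theorem 3.6 only manufactures an abstract group $M$; it does not embed $M$ as a normal subgroup of the given model $G$, and that embedding is precisely the content of Aschbacher's theorem. Your proposed generating set (lifts of $\mathrm{Aut}_{\mathcal{E}}(U)$ to $N_G(U)$) is not well-defined, since lifts are only determined modulo $C_G(U)$, and $C_G(U)$ can be large; controlling which lifts to take so that the resulting subgroup has trivial $p'$-core and the correct Sylow intersection with $S$ is the whole difficulty.

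In Step 3 the argument is circular. To apply the order comparison from Theorem 3.6(c) you need $N_1N_2$ to be a model of $\mathcal{E}$, in particular $C_{N_1N_2}(R)\le R$. But from $N_i\cap C_G(R)=Z(R)$ and $N_iC_G(R)=P$ one gets $|N_i|=|Z(R)|\cdot|\mathrm{Aut}_{\mathcal{E}}(R)|$, and since $N_1N_2\le P$ with $(N_1N_2)C_G(R)=P$, the condition $C_{N_1N_2}(R)=Z(R)$ forces $|N_1N_2|=|N_1|$, i.e.\ $N_1=N_2$ already. So you are assuming what you want to prove. A correct uniqueness argument needs a finer invariant than $\mathrm{Aut}_{\mathcal{E}}(R)$ to separate $N_1$ from $N_2$ inside $P$.
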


\begin{thmb*} Let $G$ be a finite solvable group and $G$ has isomorphic subgroup $H$ and $K$.
Let $H$ is maximal subgroup of $G$, we can set $|G: H|=p^n$. Let $p\leq 3$ and $q=5-p$. Let
$Q\in Syl_q(H)$. If $\mathcal{F}_Q(H)\unlhd \mathcal{F}_Q(G)$, then $K$ is also maximal.
\end{thmb*}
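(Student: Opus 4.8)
The plan is to argue by contradiction, choosing a counterexample $(G,H,K)$ of minimal order and mirroring the case division in the proofs of Theorem A and Theorems 1.4--1.5, so that all the genuinely new work is concentrated in the characteristic $q$ case, where the model theory of constrained fusion systems (the model theorem \cite[III, 5.10]{AsKO} and \cite[Theorem 1]{As5}) becomes available. I first record the basic compatibility: since $|G:H|=p^n$ is prime to $q$, a Sylow $q$-subgroup $Q$ of $H$ is also a Sylow $q$-subgroup of $G$, so $\mathcal{F}_Q(G)$ really is the $q$-fusion system of $G$ and the hypothesis $\mathcal{F}_Q(H)\unlhd \mathcal{F}_Q(G)$ is meaningful.

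In the case $O_p(G)\neq 1$ I would proceed exactly as in Theorem A: by \cite[Theorem 3]{IR} and \cite[Lemma 2]{IR} one gets $O_p(G)=O_p(H)=O_p(K)\leq H\cap K$ and then passes to $\bar G=G/O_p(G)$. The extra point to verify is that the fusion hypothesis descends. Because $O_p(G)$ is a normal $q'$-subgroup (as $p\neq q$), reduction modulo $O_p(G)$ induces an isomorphism $Q\xrightarrow{\sim}\bar Q$ and an isomorphism of fusion systems $\mathcal{F}_Q(G)\cong\mathcal{F}_{\bar Q}(\bar G)$ that carries $\mathcal{F}_Q(H)$ to $\mathcal{F}_{\bar Q}(\bar H)$ and preserves normality, so $\mathcal{F}_{\bar Q}(\bar H)\unlhd\mathcal{F}_{\bar Q}(\bar G)$; minimality of the counterexample then forces $\bar K$, hence $K$, to be maximal, a contradiction. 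The same remark about normal $q'$-subgroups lets me invoke \cite[Theorem A]{GR} to assume, in the remaining case $O_p(G)=1$, that $O_{q'}(G)=1$. Then $F(G)=O_q(G)\neq 1$ and $C_G(O_q(G))\leq O_q(G)$, i.e.\ $G$ is of characteristic $q$.

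Now $G$ is a model of the constrained saturated fusion system $\mathcal{F}=\mathcal{F}_Q(G)$: here $O_q(G)=O_q(\mathcal{F})$ is centric since $C_Q(O_q(G))\leq C_G(O_q(G))\leq O_q(G)$. Applying \cite[Theorem 1]{As5} to the normal subsystem $\mathcal{F}_Q(H)\unlhd\mathcal{F}$, I obtain a normal subgroup $N\unlhd G$ that is a model of $\mathcal{F}_Q(H)$, so that $\mathcal{F}_Q(N)=\mathcal{F}_Q(H)$. I would next check that $H$ is itself a model of $\mathcal{F}_Q(H)$, i.e.\ of characteristic $q$: from $O_q(G)\unlhd G$ and $|G:H|=p^n$ prime to $q$ one has $O_q(G)\leq H$, hence $O_q(G)\leq O_q(H)$, and therefore $C_H(O_q(H))\leq C_H(O_q(G))\leq C_G(O_q(G))\leq O_q(G)\leq O_q(H)$.

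The heart of the argument is the identification $H=N$. Put $R:=O_q(H)=O_q(N)$, which are equal because $H$ and $N$ are both characteristic $q$ models of the same fusion system $\mathcal{F}_Q(H)$. Since $N\unlhd G$, the characteristic subgroup $R=O_q(N)$ is normal in $G$, so $R\leq O_q(G)\leq O_q(H)=R$, giving $R=O_q(G)$ and $C_G(R)=Z(R)$. For any subgroup $X$ with $R\leq X\leq G$ one has $\mathrm{Aut}_X(R)=X/Z(R)$ inside $\mathrm{Aut}_G(R)=G/Z(R)$, and this assignment is injective because $Z(R)=C_G(R)\leq R\leq X$; as $H$ and $N$ both contain $R$ and induce on $R$ the same group $\mathrm{Aut}_{\mathcal{F}_Q(H)}(R)$ of automorphisms (using $\mathcal{F}_Q(N)=\mathcal{F}_Q(H)$), I conclude $H=N$, so $H\unlhd G$. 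A maximal subgroup that is normal and of index a power of $p$ has index exactly $p$, its quotient being a simple $p$-group; hence $|G:K|=|G:H|=p$ and $K$, having prime index, is maximal, contradicting the choice of counterexample. I expect this final identification to be the main obstacle: making rigorous that the normal model $N$ produced by \cite[Theorem 1]{As5} coincides with the prescribed maximal subgroup $H$ is exactly where the characteristic $q$ hypothesis and the self-centralizing normal $q$-subgroup $R$ do the essential work.
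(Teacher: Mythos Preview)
Your proposal is correct and follows essentially the same route as the paper: reduce to the characteristic $q$ situation via \cite[Theorem 3]{IR}, \cite[Lemma 2]{IR} and \cite[Theorem A]{GR}, then apply \cite[Theorem 1]{As5} to produce a normal model $N\unlhd G$ of $\mathcal{F}_Q(H)$ and identify $H=N$ by comparing the automorphism groups they induce on the self-centralising normal $q$-subgroup, whence $|G:H|=p$ and $K$ is maximal. The paper carries out the identification directly on $O_q(G)$ via the elementwise argument $hu^{-1}\in C_G(O_q(G))\leq O_q(G)$, while you first pin down $R=O_q(H)=O_q(N)=O_q(G)$ and then use the injectivity of $X\mapsto X/Z(R)$; these are equivalent, and your extra check that the hypothesis $\mathcal{F}_Q(H)\unlhd\mathcal{F}_Q(G)$ descends modulo normal $q'$-subgroups is a point the paper passes over silently.
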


\begin{proof} Suppose that $(G, H, K)$ is a counterexample. Since $H$ is maximal in a solvable group
$G$, we can set $|G:H|=p^n$ for some prime $p$ and positive integer $n$.

{\bf Case 1.} $O_p(G)\neq 1$. By \cite[Theorem 3]{IR}, we have $O_{p}(G)\leq H$.
By \cite[Lemma 2]{IR}, we have
$$O_{p}(G)=O_{p}(G)\cap H= O_{p}(H),~~~~O_{p}(G)\cap K= O_{p}(K).$$
Since $H\cong K$, we have $O_{p}(H)\cong O_{p}(K)$. Hence, $O_{p}(G)\leq K$.
Now, we focus on $(G/O_{p}(G), H/O_{p}(G), K/O_{p}(G))$, we can see that
$K/O_{p}(G)$ is maximal in $G/O_{p}(G)$ because $(G, H, K)$ is a counterexample. So $K$ is maximal in $G$.
That is a contradiction.

{\bf Case 2.} $O_p(G)=1$. First, since $G$ is solvable, we have
 $O_{p'}(G)\neq 1$. And $O_{p'}(G)\leq H$ because $|G: H|=p^n$.

By \cite[Theorem A]{GR}, we can see that $O_{q'}(G)=1$ because $(G, H, K)$ is a counterexample.
So $F(G)=O_{q}(G)$ and $O_{q}(G)\neq 1$ because $O_{p'}(G)\neq 1$. Since $C_{G}(O_{q}(G))\leq O_{q}(G)$,
it implies $G$ is a model of fusion system
$\mathcal{F}_Q(G)$. Since $\mathcal{F}_Q(H)\unlhd \mathcal{F}_Q(G)$, thus there exists a normal subgroup
$U$ of $G$ such that $$\mathcal{F}_Q(H)=\mathcal{F}_Q(U)$$
by \cite[Theorem 1]{As5}.

Since $\mathcal{F}_Q(H)=\mathcal{F}_Q(U),$ we have
$$\mathrm{Aut}_H(O_q(G))=\mathrm{Aut}_U(O_q(G)).$$
So for each $h\in H$, we have $c_h|_{O_q(G)}=c_u|_{O_q(G)}$ for some $u\in U$. That means
$$hu^{-1}\in C_G(O_{q}(G))\leq O_q(G)\leq H\cap U.$$
Hence, $H=U\unlhd G$.
Since $G/H$ is a $p$-group, we have that $|G/H|=p$ because $H$ is maximal in $G$. Hence, $K$ is maximal in $G$.
That is a contradiction.

So, we complete the proof.
\end{proof}

\begin{theorem} Let $G$ be a finite solvable group and $G$ has isomorphic subgroup $H$ and $K$.
Let $H$ is maximal subgroup of $G$, we can set $|G: H|=p^n$. Let $p\leq 3$ and $q=5-p$. Let
$Q\in Syl_q(H)$. Set $\mathcal{F}:=\mathcal{F}_Q(G)$. If $ O^{q'}(\mathcal{F})\geq\mathcal{F}_Q(H)$ and $ O^{q}(\mathcal{F})=\mathcal{F}$, then $K$ is also maximal.
\end{theorem}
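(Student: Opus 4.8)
The plan is to reuse the minimal-counterexample scheme of the proofs of Theorem B and Theorems 1.4 and 1.5, splitting on whether $O_p(G)=1$, and to carry the decisive argument in the characteristic-$q$ case by converting the two fusion-theoretic hypotheses into statements about the normal subgroups $O^q(G)$ and $O^{q'}(G)$. So I would take $(G,H,K)$ to be a counterexample of least order, put $q=5-p$, and note that because $|G:H|=p^n$ with $p\neq q$ the subgroup $Q$ is a Sylow $q$-subgroup of $G$ as well as of $H$; thus $\mathcal{F}=\mathcal{F}_Q(G)$ is saturated over a full Sylow $q$-subgroup and $\mathcal{F}_Q(H)\subseteq\mathcal{F}$.

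In the case $O_p(G)\neq 1$ I would argue exactly as in the earlier proofs: \cite[Theorem 3]{IR} and \cite[Lemma 2]{IR} give $O_p(G)=O_p(H)\leq H$, and the isomorphism $H\cong K$ then yields $O_p(G)\leq K$. Writing $N=O_p(G)$ and $\bar G=G/N$, the essential point is that the two hypotheses descend to $\bar G$. Since $p\neq q$ we have $N\cap Q=1$, so $Q\cong\bar Q=QN/N\in\mathrm{Syl}_q(\bar G)$, and I claim the projection induces an isomorphism of fusion systems $\mathcal{F}_Q(G)\cong\mathcal{F}_{\bar Q}(\bar G)$: fullness on morphisms comes from a Sylow argument inside each product $RN=N\rtimes R$ (a $q$-subgroup of $RN$ lies in an $N$-conjugate of $R$), and faithfulness from $N\cap Q=1$; the same identification holds for $H$. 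As $O^q$, $O^{q'}$ and the subsystem $\mathcal{F}_Q(H)$ are carried to their analogues over $\bar Q$ by this isomorphism, the triple $(\bar G,\bar H,\bar K)$ satisfies the hypotheses of the theorem, and minimality of $(G,H,K)$ makes $\bar K$, hence $K$ (as $N\leq K$), maximal --- a contradiction.

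In the case $O_p(G)=1$, \cite[Theorem A]{GR} forces $O_{q'}(G)=1$, so $F(G)=O_q(G)\neq 1$ and $C_G(O_q(G))\leq O_q(G)$; hence $G$ is of characteristic $q$ and is a model of the constrained system $\mathcal{F}=\mathcal{F}_Q(G)$. I would then translate the two hypotheses. From the standard realization $O^q(\mathcal{F}_Q(G))=\mathcal{F}_{Q\cap O^q(G)}(O^q(G))$ (see \cite{AsKO}), the assumption $O^q(\mathcal{F})=\mathcal{F}$ forces $Q\leq O^q(G)$ and therefore $O^q(G)=G$. By Aschbacher's correspondence \cite[Theorem 1]{As5}, the normal subsystem $O^{q'}(\mathcal{F})$ has a unique model $W\unlhd G$; since its index is prime to $q$ we get $Q\leq W$, and minimality among normal subgroups of $G$ containing $Q$ identifies $W=O^{q'}(G)$, so that $O^{q'}(\mathcal{F})=\mathcal{F}_Q(O^{q'}(G))$. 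The hypothesis $O^{q'}(\mathcal{F})\geq\mathcal{F}_Q(H)$ now gives $\mathrm{Aut}_H(O_q(G))\leq\mathrm{Aut}_{O^{q'}(G)}(O_q(G))$; comparing the conjugation actions on $O_q(G)\leq Q$ and using $C_G(O_q(G))\leq O_q(G)\leq O^{q'}(G)$ yields $H\leq O^{q'}(G)$, exactly the mechanism by which $H=U$ was obtained in the proof of Theorem B.

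To conclude I would use maximality of $H$: since $O^{q'}(G)\unlhd G$ and $H\leq O^{q'}(G)$, either $O^{q'}(G)=H$ or $O^{q'}(G)=G$. If $O^{q'}(G)=H$ then $H\unlhd G$ has $p$-power index, so $|G:H|=p$, whence $K$ has prime index and is maximal, a contradiction. If $O^{q'}(G)=G$ then, together with $O^q(G)=G$, the abelianization $G/G'$ would be at once a $q$-group and a $q'$-group, forcing $G=G'$ and contradicting the solvability of the nontrivial group $G$. Either way the counterexample collapses. The step I expect to be most delicate is the fusion-to-group dictionary: establishing the realizations $O^q(\mathcal{F}_Q(G))=\mathcal{F}_{Q\cap O^q(G)}(O^q(G))$ and $W=O^{q'}(G)$ cleanly enough that the two hypotheses become precisely $O^q(G)=G$ and $H\leq O^{q'}(G)$, and, in the reduction, verifying the coprime-quotient isomorphism $\mathcal{F}_Q(G)\cong\mathcal{F}_{\bar Q}(\bar G)$ that legitimizes the induction.
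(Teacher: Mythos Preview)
Your proof is correct and follows the paper's overall scheme (minimal counterexample, split on $O_p(G)$, reduce to the characteristic-$q$ case via \cite[Theorem A]{GR}, then use Aschbacher's model correspondence to pull the fusion hypothesis $O^{q'}(\mathcal{F})\geq\mathcal{F}_Q(H)$ down to $H\leq U$ for some $U\unlhd G$). The endgame in Case~2, however, is genuinely different. The paper stays at the fusion level: from $U=G$ it gets $O^{q'}(\mathcal{F})=\mathcal{F}$, and together with $O^{q}(\mathcal{F})=\mathcal{F}$ concludes that $\mathcal{F}$ is not Puig-solvable, contradicting \cite[Part II, Theorem 12.4]{AsKO} since $G$ is solvable. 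You instead push both hypotheses to the group level, obtaining $O^{q}(G)=G=O^{q'}(G)$ and hence $G=G'$, which contradicts solvability directly. Your route is more elementary, avoiding the Puig-solvability machinery entirely; the paper's route is shorter once that machinery is available, and it also dispatches the subcase $U=H$ by quoting Theorem~B rather than rederiving $|G:H|=p$. In Case~1 you are in fact more careful than the paper: you verify that the coprime quotient by $O_p(G)$ induces an isomorphism of fusion systems so that the hypotheses on $O^{q}(\mathcal{F})$ and $O^{q'}(\mathcal{F})$ descend, whereas the paper invokes minimality without checking this. The one place to tighten your write-up is the identification $W=O^{q'}(G)$: the cleanest justification is that in the characteristic-$q$ model, any normal subgroup $N\geq O_q(G)$ with $\mathcal{F}_Q(N)=\mathcal{F}_Q(G)$ must equal $G$ (compare $\mathrm{Aut}$ on $O_q(G)$ and use $C_G(O_q(G))\leq O_q(G)$), so $O^{q'}(\mathcal{F})=\mathcal{F}$ forces $O^{q'}(G)=G$.
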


\begin{proof} Suppose that $(G, H, K)$ is a counterexample. Since $H$ is maximal in a solvable group
$G$, we can set $|G:H|=p^n$ for some prime $p$ and positive integer $n$.

{\bf Case 1.} $O_p(G)\neq 1$. By \cite[Theorem 3]{IR}, we have $O_{p}(G)\leq H$.
By \cite[Lemma 2]{IR}, we have
$$O_{p}(G)=O_{p}(G)\cap H= O_{p}(H),~~~~O_{p}(G)\cap K= O_{p}(K).$$
Since $H\cong K$, we have $O_{p}(H)\cong O_{p}(K)$. Hence, $O_{p}(G)\leq K$.
Now, we focus on $(G/O_{p}(G), H/O_{p}(G), K/O_{p}(G))$, we can see that
$K/O_{p}(G)$ is maximal in $G/O_{p}(G)$ because $(G, H, K)$ is a counterexample. So $K$ is maximal in $G$.
That is a contradiction.

{\bf Case 2.} $O_p(G)=1$. Since $G$ is solvable, we have
$F(G)=O_{q}(G)=O_{p'}(G)\neq 1$. And $C_{G}(O_q(G))\leq O_q(G)$. So $G$ is a model of fusion system
$\mathcal{F}_Q(G)$. Since $O^{q'}(\mathcal{F})\unlhd \mathcal{F}_Q(G)$ and $ O^{q}(\mathcal{F})\unlhd \mathcal{F}_Q(G)$, thus there exist normal subgroup
$U$ of $G$ such that $$O^{q'}(\mathcal{F})=\mathcal{F}_Q(U)$$
by \cite[Theorem 1]{As5}.

We have $O_{p'}(G)\leq H$ because $|G: H|=p^n$.
Similarly, we have $O_{p'}(G)\leq K$ because $|G:K|=|G:H|=p^n$.

Since $\mathcal{F}_Q(U)= O^{q'}(\mathcal{F})\geq\mathcal{F}_Q(H),$ we have
$$\mathrm{Aut}_U(O_q(G))\geq \mathrm{Aut}_H(O_q(G)).$$
So for each $h\in H$, we have $c_u|_{O_q(G)}=c_h|_{O_q(G)}$ for some $u\in U$. That means
$$hu^{-1}\in  C_G(O_{q}(G))\leq O_q(G)\leq Q\leq U.$$
Hence, $H\leq U.$ Since $H$ is maximal in $G$, we have $U=H$ or $U=G$.
If $H=U\unlhd G$, we have $K$ is also maximal in $G$ by above theorem.
That is a contradiction. So, we have $U=G.$ That means $\mathcal{F}=O^{q'}(\mathcal{F})$.

Since $O^{q}(\mathcal{F})=\mathcal{F}$, we have $\mathcal{F}$ is not Puig-solvable. But $G$ is a model of $\mathcal{F}$ and $G$ is solvable, we
can see that $\mathcal{F}$ is Puig-solvable by \cite[Part II, Theorem 12.4]{AsKO}.
That is a contradiction.

So, we complete the proof.
\end{proof}

\textbf{ACKNOWLEDGMENTS}\hfil\break
The authors would like to thank  Prof. C. Broto for his discussion on the definition of characteristic p type group.
The authors would like to thank Southern University of Science and Technology for their kind hospitality hosting joint meetings of them.

\end{document}